\newcommand{\ds}{\displaystyle}
\newcommand{\R}{{\mathbb{R}}}
\newcommand{\M}{{\mathcal{M}}}
\newcommand{\A}{{\mathcal{A}}}
\newcommand{\B}{{\mathcal{B}}}
\newcommand{\mA}{{\mathfrak{A}}}
\newcommand{\wt}{\widetilde}
\renewcommand{\phi}{\varphi}
\def\be{\begin{equation}}
\def\ee{\end{equation}}
\newtheorem{theorem}{Theorem}[section]
\newtheorem{lemma}[theorem]{Lemma}
\newtheorem{corollary}[theorem]{Corollary}
\theoremstyle{remark}
\newtheorem{remark}[theorem]{Remark}
\numberwithin{equation}{section}
\title{On concentrators and related approximation constants}
\author{A. V. Bondarenko}
\address{Centre de Recerca Matem\`{a}tica
Apartat 50, 08193 Bellaterra, Barcelona, Spain \newline and\newline
Department of Mathematical Analysis, National Taras Shevchenko
University, str. Vo\-lo\-dy\-myr\-ska, 64, Kyiv, 01033, Ukraine}
\email{andriybond@gmail.com}
\author{A. Prymak}
\address{Department of Mathematics, University of Manitoba, Winnipeg, MB, R3T2N2, Canada}
\email{prymak@gmail.com}
\author{D. Radchenko}
\address{Max Planck Institute for Mathematics, Vivatsgasse 7, 53111 Bonn,
Germany\newline and\newline Department of Mathematical Analysis,
National Taras Shevchenko University, str. Vo\-lo\-dy\-myr\-ska, 64,
Kyiv, 01033, Ukraine} \email{danradchenko@gmail.com}
\thanks{The first author thanks the Mathematisches Forschungsinstitut Oberwolfach for their hospitality during the preparation
of this manuscript and for providing a stimulating atmosphere for research. The second author was supported by NSERC of Canada, including the visit to Centre de Recerca Matem\`{a}tica (Barcelona) in April 2012.}
\keywords{Probabilistic method, concentrator graphs, additive set functions, Whitney constant.}
\subjclass[2010]{Primary 41A63 (46A10). Secondary 05D40, 05C35.}
\begin{document}

\begin{abstract}
Pippenger (\cite{Pi77}) showed the existence of $(6m,4m,3m,6)$-concentrator for each positive integer $m$ using a probabilistic method. We generalize his approach and prove existence of $(6m,4m,3m,5.05)$-concentrator (which is no longer regular, but has fewer edges). We apply this result to improve the constant of approximation of almost additive set functions by additive set functions from $44.5$ (established by Kalton and Roberts in~\cite{KaRo83}) to $39$. We show a more direct connection of the latter problem to the Whitney type estimate for approximation of continuous functions on a cube in $\R^d$ by linear functions, and improve the estimate of this Whitney constant from $802$ (proved by Brudnyi and Kalton in~\cite{BrKa00}) to $73$.
\end{abstract}

\maketitle

\section{Introduction}

Our original motivation was the following Whitney-type inequality, valid for each $f\in C([0,1]^d)$:
\[
\min_{L}\max_{x\in [0,1]^d}|f(x)-L(x)|\le w_2(d) \max_{x,y\in[0,1]^d}|f(x)+f(y)-2f((x+y)/2)|,
\]
where the minimum is taken over all polynomials $L$ in $d$ variables of total degree $\le 1$ (linear polynomials), and $C([0,1]^d)$ is the space of all continuous real-valued functions on the unit cube $[0,1]^d$. Brudnyi and Kalton (see~\cite{BrKa00}) showed that $w_2(d)\le 802$ and conjectured that $w_2(d)\le 2$. We will show here that $w_2(d)\le 73$, and improve some other constants along the way.

The above estimates, however, stem from seemingly irrelevant combinatorial problem of existence of certain concentrators. An $(m,p,q,r)$-concentrator is a bipartite graph with $m$ inputs and $p$ outputs, not more than $mr$ edges, such that for any set of $k\le q$ inputs, there exist $k$ disjoint edges to some $k$ outputs. Using a probabilistic argument, Pippenger~\cite{Pi77} showed that $(6m,4m,3m,6)$-concentrators exist for any integer $m\ge 1$. Reducing the average degree of inputs for large $m$ is of primary interest in our context. Our main result is the following theorem.
\begin{theorem}\label{conc}
For any large enough integer $m$ there exists a $(6m,4m,3m,5.05)$-concentrator.
\end{theorem}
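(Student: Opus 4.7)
My plan is to adapt Pippenger's probabilistic method, but to allow the input-degree sequence of the random bipartite graph to be non-regular; this extra freedom is what enables pushing the average input-degree strictly below $6$. Fix positive integers $d_1\le d_2$ and $\alpha\in(0,1)$ satisfying $(1-\alpha)d_1+\alpha d_2\le 5.05$, and partition the inputs $I=[6m]$ into $I_1$ of size $\lfloor(1-\alpha)6m\rfloor$ and $I_2=I\setminus I_1$. Draw a random bipartite graph $G$ on $I\cup O$, with $O=[4m]$, by letting each input $i\in I_j$ choose a uniformly random $d_j$-subset $S_i$ of $O$, independently across $i$. Then $G$ has at most $30.3m$ edges, and by Hall's marriage theorem it is a $(6m,4m,3m,5.05)$-concentrator if and only if $|N(A)|\ge|A|$ for every $A\subseteq I$ with $|A|\le 3m$, so it suffices to show that this event has positive probability.

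For $k=|A|$ and $a_j=|A\cap I_j|$, the bad event is contained in the existence of a witness $B\subseteq O$ with $|B|=k-1$ and $N(A)\subseteq B$. By independence of the sets $S_i$,
\[
\mathbb{P}(\text{Hall fails}) \le \sum_{k=1}^{3m}\sum_{a_1+a_2=k}\binom{|I_1|}{a_1}\binom{|I_2|}{a_2}\binom{4m}{k-1}\prod_{j=1}^{2}\left(\frac{\binom{k-1}{d_j}}{\binom{4m}{d_j}}\right)^{a_j}.
\]
Stirling's formula converts each summand into the form $e^{m\,\Phi(k/m,\,a_1/m)}$ for an explicit function $\Phi=\Phi_{d_1,d_2,\alpha}$, and it suffices to choose the parameters so that $\sup\Phi<0$; then the sum is $o(1)$ as $m\to\infty$. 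A natural candidate is $d_1=5$ (the ``bulk'' degree of most inputs) together with a small fraction $\alpha$ of higher-degree ``expander'' inputs of moderate degree $d_2$. Since any $A$ containing even one high-degree input automatically has a large neighborhood, the critical regime is when $A$ consists almost entirely of degree-$d_1$ inputs and $k$ is near the boundary $3m$, where a pure degree-$5$ analysis fails only by a small margin.

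The main obstacle is the two-variable optimization of $\Phi$ over $(k/m,\,a_1/m)$, subject simultaneously to the degree-budget constraint $(1-\alpha)d_1+\alpha d_2\le 5.05$. The constant $5.05$ arises as a workable upper approximation to the infimum of achievable averages in this family of constructions: with $\alpha=0$ and $d_1=5$ the analogue of Pippenger's bound is tight and in fact fails at the boundary, and introducing a small positive $\alpha$ with $d_2>5$ restores $\sup\Phi<0$ at the price of a slight excess over the ``bulk'' average of $5$. Once plausible parameters are identified, the remainder is a careful but routine Stirling-based verification of the bound uniformly in $(k,a_1)$.
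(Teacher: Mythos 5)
Your probabilistic model is where the argument breaks. You draw the graph by letting each input $i$ independently choose a uniformly random $d_j$-subset $S_i$ of the outputs. The paper (following Pippenger) instead uses a permutation (configuration) model: one fixes a set $\M$ of $N\approx 30.3m$ ``slots,'' distributes $5$ or $6$ of them to each input and $7$ or $8$ of them to each output, and picks a uniformly random bijection between input-slots and output-slots. The crucial payoff of that model is that \emph{every output has bounded degree}, which makes it substantially harder for all edges out of a set $A$ to land in a small $B$: once the slots of $B$ are used up, further edges are forced elsewhere. Your independent-subset model imposes no such cap on output degrees, and this costs an exponential factor.

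To see that this cost is fatal, evaluate the exponential rate of the critical term $k=3m$ of your union bound in the baseline case where all inputs have degree $6$ (this would give a $(6m,4m,3m,6)$-concentrator if it worked). With $h(x,y)=x\ln x-y\ln y-(x-y)\ln(x-y)$, the rate per unit $m$ of the summand $\binom{6m}{k}\binom{4m}{k-1}\bigl(\binom{k-1}{6}/\binom{4m}{6}\bigr)^{k}$ is
\[
h(6,3)+h(4,3)+18\ln\tfrac34\approx 4.159+2.249-5.178\approx 1.23>0,
\]
so the $k\approx 3m$ term of your sum grows like $e^{1.23\,m}$ and the union bound already fails to reproduce Pippenger's theorem, let alone anything with average degree $5.05$. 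In the permutation model the corresponding probability is $\binom{27m}{18m}/\binom{36m}{18m}\approx e^{m(h(27,18)-h(36,18))}\approx e^{-7.76m}$, much smaller than $(3/4)^{18m}\approx e^{-5.18m}$, and the total rate $4.159+2.249-7.76\approx -1.35<0$ is exactly why Pippenger's argument goes through.

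Your suggestion that a small fraction $\alpha$ of higher-degree inputs ``restores $\sup\Phi<0$'' cannot rescue this: when $A$ consists entirely of degree-$d_1$ inputs the high-degree inputs never enter, and within the budget $(1-\alpha)d_1+\alpha d_2\le 5.05$ one cannot make the rate negative uniformly in $(k,a_1)$ (and some of the worst cases are at genuinely mixed $a_1$, not at the boundary, so the heuristic that ``any $A$ with one high-degree input is safe'' also does not hold). The fix is to replace independent subset selection by the permutation/configuration model, keeping your idea of mixed input degrees; then both input degrees (5 and 6) and the induced output degrees (7 and 8) enter the probability, leading to the three-parameter optimization that the paper handles in Lemmas 2.1 and 4.3. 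After that swap, the high-level outline (union bound, Stirling, optimize the exponential rate) matches the paper, but the independent model itself is a dead end.
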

For the proof, we use a modification of Pippenger's approach, but this requires much more technical estimates. Unfortunately, our method does not allow to prove that $(6m,4m,3m,5)$-concentrators exist for large $m$, but we conjecture that this is so, see Remark~\ref{r2.2}.


Pippenger's concentrators were used by Kalton and Roberts in~\cite{KaRo83} to prove the following. There exists an absolute constant $K\le 44.5$ such that for any algebra $\mA$ of finite sets and any map $\nu:\mA\to\R$ satisfying $|\nu(A \cup B)-\nu(A)-\nu(B)|\le 1$ whenever $A\cap B=\emptyset$, there exists an additive set-function $\mu:\mA\to\R$ (i.e., $\mu(A \cup B)=\mu(A)+\mu(B)$ for $A\cap B=\emptyset$), satisfying $|\nu(A)-\mu(A)|\le K$ for any $A\in \mA$. We remark that the same is true if one does not restrict the elements of $\mA$ to be \emph{finite} sets, see~\cite[Proof of Theorem~4.1, p.~809]{KaRo83}. From Theorem~\ref{conc}, we immediately obtain the following improvement.
\begin{corollary}\label{Kalton}
$K< 39$.
\end{corollary}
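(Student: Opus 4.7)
The plan is to transplant Theorem~\ref{conc} into the Kalton--Roberts argument from \cite{KaRo83} and recompute their final constant. First, I would isolate in \cite{KaRo83} the single step where Pippenger's $(6m,4m,3m,6)$-concentrator is invoked, and track how the parameter $r=6$ propagates through the rest of their proof: the resulting estimate takes the form $K \le f(r)$ for an explicit function $f$ (essentially affine in $r$ at leading order), with $f(6)=44.5$. Next, I would check that their proof uses only the defining edge-count bound and the matching property of the concentrator, never its regularity; hence the (non-regular) concentrator of Theorem~\ref{conc}, available for all sufficiently large $m$, may be substituted with no other change. Finally, I would substitute $r=5.05$ and verify that $f(5.05)<39$.

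The hard part is purely bookkeeping. Since $5.05/6\approx 0.842$, even a naive linear scaling suggests a new constant of roughly $44.5\cdot 0.842\approx 37.5$, leaving some headroom below $39$. Two minor subtleties must be handled. First, any auxiliary parameter in \cite{KaRo83} that was tuned for $r=6$ should be re-optimized for $r=5.05$; because re-optimization can only decrease the bound, the output remains below $39$. Second, Theorem~\ref{conc} requires $m$ to be sufficiently large, whereas the Kalton--Roberts reduction a~priori works with arbitrary finite algebras; this is harmless since their argument proceeds by refining the algebra and passing to a limit, so one invokes the concentrator only at sufficiently refined levels, and no finite-$m$ loss appears in the limiting constant.
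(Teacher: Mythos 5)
Your approach is exactly the paper's: follow the Kalton--Roberts proof with the concentrator parameter $\gamma$ tracked as a variable, obtaining the explicit bound $K\le\frac{7+4\gamma-4/3}{2/3}$, and then substitute $\gamma=5.05$ to get $K\le 38.8<39$. One small caveat: your ``naive linear scaling'' heuristic of $\approx 37.5$ \emph{underestimates} the true value $38.8$, because the Kalton--Roberts bound $\frac{7+4\gamma-4/3}{2/3}=\frac{17}{2}+6\gamma$ is affine with a positive intercept rather than proportional to $\gamma$ --- so the actual margin below $39$ is tighter than your back-of-the-envelope check suggests, and the exact bookkeeping you defer is indeed necessary.
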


Since Brudnyi and Kalton~\cite{BrKa00} reduced the problem of estimating $w_2(d)$ to the problem of estimating $K$, Corollary~\ref{Kalton} would provide an immediate (but insignificant) improvement of the estimate on $w_2(d)$. We establish a more direct connection between these two questions and prove the following.
\begin{theorem}\label{Brudnyi}
$w_2(d)< 73$.
\end{theorem}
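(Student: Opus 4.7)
The plan is to set up a direct reduction of the Whitney inequality to Corollary~\ref{Kalton}, bypassing the multi-step reduction of Brudnyi and Kalton~\cite{BrKa00} which accumulates several multiplicative losses. Rescaling, I may assume $\omega:=\sup_{x,y\in[0,1]^d}|f(x)+f(y)-2f((x+y)/2)|\le 1$ and seek a linear polynomial $L$ with $\|f-L\|_\infty<73$. By uniform continuity of $f$ and compactness in the $(d+1)$-dimensional space of linear polynomials of bounded sup-norm, it is enough to produce such an $L$ for the restriction of $f$ to a dyadic grid $V_n=(2^{-n}\mathbb{Z})^d\cap[0,1]^d$, and then extract a convergent subsequence as $n\to\infty$.

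The core of the argument is to attach to $f|_{V_n}$ an algebra $\mA$ of finite sets with a distinguished family $\{A_x:x\in V_n\}\subset\mA$, together with a set function $\nu:\mA\to\R$, such that: (i)~$\nu(A_x)$ encodes $f(x)$ up to a fixed normalization; (ii)~every midpoint triple $x,y,z\in V_n$ with $z=(x+y)/2$ is represented inside $\mA$ by a combinatorial identity of the form $A_x\sqcup A_y=A_z\sqcup A_z'$, with $A_z'$ a second ``copy'' of $A_z$, so that the almost-additivity defect of $\nu$ on $A_x\sqcup A_y$ reduces to $f(x)+f(y)-2f(z)$ and is therefore controlled by $\omega$; (iii)~any additive $\mu$ on $\mA$ satisfies the induced Jensen-type identity $\mu(A_x)+\mu(A_y)=2\mu(A_z)$ on $V_n$, forcing $x\mapsto\mu(A_x)$ to be the restriction of a linear polynomial, and every linear polynomial arises in this way. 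Corollary~\ref{Kalton} then supplies an additive $\mu$ close to $\nu$ in sup-norm, and (iii) converts $\mu$ into the required linear approximation of $f$ on $V_n$.

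The main obstacle is the joint implementation of (i)--(iii) with small numerical constants. The disjoint-union relations in (ii) must be built for all midpoint triples in $V_n$ simultaneously without inflating the almost-additivity constant of $\nu$ beyond $\omega$, and the distinguished sets $A_x$ must be symmetric enough in (iii) that $\mu(A_z)=\mu(A_z')$ holds for every additive $\mu$, so that the Jensen identity forces genuine affinity rather than a weaker mid-point symmetry. Careful tracking of the constants through this single reduction, combined with the bound $K<39$ from Corollary~\ref{Kalton}, has to yield $w_2(d)<73$ uniformly in $d$; this is the ``more direct connection'' promised in the introduction, and replaces the chain of auxiliary problems in~\cite{BrKa00} whose separate losses compound to the much larger constant $802$.
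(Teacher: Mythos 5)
Your high-level idea---translate the midpoint structure of the cube into an algebra of sets, build an ``almost additive'' $\nu$ from $f$, apply the Kalton--Roberts machinery, and read off a linear $L$ from the resulting additive $\mu$---is the right one, but both the construction you sketch and the constant it would yield differ from the paper's in ways that matter.

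First, the constant. You invoke Corollary~\ref{Kalton} ($K<39$). The paper instead proves and uses Lemma~\ref{better}: under the stronger \emph{modular} hypothesis $|\nu(A)+\nu(B)-\nu(A\cap B)-\nu(A\cup B)|\le 1$ (together with $\nu(\emptyset)=0$), one can take $\wt K<36$. This sharper constant is exactly what feeds the final bound. With the normalization that the second-difference modulus equals $\tfrac12$, the vertex bound $36$ plus one more use of the modulus gives $36+\tfrac12=\tfrac{73}{2}$, hence $w_2(d)<73$. If you only have $K<39$ at the vertices, the same extension step gives $39+\tfrac12=\tfrac{79}{2}$, i.e.\ $w_2(d)<79$, not $<73$. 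So the claim that ``careful tracking of constants \dots has to yield $w_2(d)<73$'' does not follow from Corollary~\ref{Kalton} alone; you need the modular-condition variant.

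Second, the reduction itself. The paper does not introduce a dyadic grid, a compactness/subsequence argument, or a bespoke set system encoding midpoint triples by disjoint-union relations $A_x\sqcup A_y = A_z\sqcup A_z'$. It simply takes $\mA=2^{\{1,\dots,d\}}$ with the natural bijection $\tau$ onto the cube's vertices and sets $\nu(A)=f(\tau(A))-f(0)$. The key structural fact is the coordinate identity $\tau(A)+\tau(B)=\tau(A\cap B)+\tau(A\cup B)$, so both pairs share the same midpoint $\hat x$; two applications of the modulus bound then give the modular condition $|\nu(A)+\nu(B)-\nu(A\cap B)-\nu(A\cup B)|\le 1$ for \emph{all} $A,B$, with no disjointness restriction. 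Additivity of $\mu$ on subsets automatically produces a linear function $\wt L(x)=\sum_j\mu(\{j\})x_j$ agreeing with $\mu$ at every vertex; there is nothing to ``force'' and no second copy $A_z'$ to worry about. Finally, the passage from vertices to the full cube is a single elementary step using the modulus once more (taking $\tilde x\in[0,\tfrac12]^d$ after translating by a vertex), not a limit over finer grids.

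Third, there is a genuine gap in the scheme you describe: you require every additive $\mu$ on $\mA$ to satisfy $\mu(A_z)=\mu(A_z')$ for the two ``copies'' of $A_z$. Additivity alone gives no such symmetry between distinct sets; you would need either $A_z=A_z'$ or a nontrivial quotient of the algebra, and neither is supplied. The paper's construction sidesteps this entirely because the midpoint identity $\tau(A)+\tau(B)=\tau(A\cap B)+\tau(A\cup B)$ is exact and involves no auxiliary duplicate sets.
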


Using Corollary~\ref{Kalton} and Theorem~\ref{Brudnyi}, one can follow~\cite{BrKa00} to obtain an improvement of other approximation constants, including Whitney constant for unit balls of finite dimensional $l_p$-spaces, homogeneous Whitney constants, etc.

The paper is organized as follows. In Section~\ref{sec-conc}, we state the main technical lemma and use it to prove Theorem~\ref{conc}. The lemma itself is proved in Section~\ref{tech} using reduction to a non-linear optimization problem, which was resolved with the aid of a computer. The proof of Corollary~\ref{Kalton} and Theorem~\ref{Brudnyi} can be found in Section~\ref{sec-const}.

\section{Concentrators}\label{sec-conc}

Let $\binom nm=\frac{n!}{m!(n-m)!}$ be the binomial coefficient, and we set $\binom nm=0$ if $m<0$ or $m>n$. The most technical part of our result is the following lemma, which will be proved later in Section~\ref{tech}.
\begin{lemma}\label{lemma-tech}
For any large integer $m$, with $s=\lceil 5.7 m \rceil$, we have
\be\label{main}
\sum_{k=1}^{3m} \sum_{l=0}^{k} \sum_{r=0}^{k} \binom sl \binom{6m-s}{k-l} \binom {s-4m}{r} \binom{8m-s}{k-r} \frac{\binom{8k-r}{6k-l}}{\binom{36m-s}{6k-l}}<1.
\ee
\end{lemma}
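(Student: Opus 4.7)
The plan is to reduce (\ref{main}) to a finite-dimensional nonlinear optimization problem by asymptotic analysis, and then verify that optimization problem on a computer. Introduce the rescaled variables $x=k/m$, $y=l/m$, $z=r/m$, and $\sigma=s/m$, so that $\sigma\to 5.7$ as $m\to\infty$. The triple sum runs over $O(m^3)$ lattice points in a bounded subset of $\R^3$ cut out by the non-negativity of the six binomial coefficients (explicitly, $0\le y$, $0\le x-y\le 6-\sigma$, $0\le z\le\sigma-4$, $0\le x-z\le 8-\sigma$, and $x\le 3$; the remaining constraints are automatic).

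The first step is to apply Stirling's formula in the form $\binom{am}{bm}=e^{m\cdot a H(b/a)+O(\log m)}$, where $H(t)=-t\log t-(1-t)\log(1-t)$, to each of the six binomial coefficients in the summand. Collecting exponents shows that the summand equals $e^{m F(x,y,z)+O(\log m)}$, where
\begin{multline*}
F(x,y,z)=\sigma H\!\left(\tfrac{y}{\sigma}\right)+(6-\sigma) H\!\left(\tfrac{x-y}{6-\sigma}\right)+(\sigma-4) H\!\left(\tfrac{z}{\sigma-4}\right)\\
+(8-\sigma) H\!\left(\tfrac{x-z}{8-\sigma}\right)+(8x-z) H\!\left(\tfrac{6x-y}{8x-z}\right)-(36-\sigma) H\!\left(\tfrac{6x-y}{36-\sigma}\right).
\end{multline*}
The error term $O(\log m)$ can be controlled uniformly away from the boundary of the domain, and the discrepancy between $\sigma$ and $5.7$ is $O(1/m)$, absorbed by the continuity of $F$.

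The central task, and the principal obstacle, is then to establish $\sup F\le-\delta$ for some absolute $\delta>0$ on the closed feasible region. I would carry this out by a rigorous computer-assisted grid search: tabulate $F$ on a fine cubical grid, derive an a priori Lipschitz (or gradient) bound for $F$ on each cell via interval arithmetic, and recursively refine any cell where the grid value plus the Lipschitz slack exceeds $-\delta$. Points near the relative boundary of the domain, where the arguments of some $H$'s tend to $0$ or $1$ and the $O(\log m)$ term degenerates, must be handled separately; the cleanest way is to combine the continuity of $x\mapsto x H(y/x)$ at $x=0$ with the observation that the contribution of the boundary strip (e.g.\ small $k$, or $k-l$ close to $0$ or $6m-s$, etc.) is only polynomial in $m$ and is therefore negligible after multiplication by $e^{-\delta m}$.

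Once $F\le-\delta$ is established, the full sum is bounded by $C m^{3} e^{-\delta m/2}$, which is strictly less than $1$ for all sufficiently large $m$, completing the proof. The slack in the choice $\sigma=5.7>5$ is essential: one should expect $\sup F\ge 0$ at $\sigma=5$, which is exactly why Theorem~\ref{conc} falls short of the conjectured optimal constant $5$, and why the cited Remark~\ref{r2.2} is only a conjecture.
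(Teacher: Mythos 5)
Your plan is the paper's plan in outline (Stirling, rate function in rescaled variables, verification that the rate function is strictly negative, absorption of the $O(m^3)$ lattice points), and the function $F$ you write down coincides with the paper's $\phi$ in~\eqref{phi} via the identity $aH(b/a)=a\ln a-b\ln b-(a-b)\ln(a-b)$. But the claim you reduce the lemma to --- that $\sup F\le-\delta$ for some $\delta>0$ over the \emph{closed} feasible region --- is false. Every term of $F$ has the form $t\,H(u/t)$ and tends to $0$ as $t\to 0$, so $F(0,0,0)=0$: the supremum of $F$ on the closed region is $0$, attained at the vertex $x=y=z=0$. This is not a harmless boundary technicality. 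For small fixed $k$ (already for $k=1$) the summand in~\eqref{main} decays only polynomially in $m$, with no exponential factor at all, and for $k\sim\epsilon m$ the rate is negative but only of size $O(\epsilon)$; no single $\delta>0$ works uniformly down to $k=1$. Your remark that ``the contribution of the boundary strip ... is only polynomial in $m$ and is therefore negligible after multiplication by $e^{-\delta m}$'' does not repair this: precisely in that strip there is no $e^{-\delta m}$ factor available, so as phrased the statement is circular.

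The paper addresses exactly this by splitting the sum at $k=\lceil 2.6m\rceil$. For $k\le\lceil 2.6m\rceil$ (Lemma~\ref{lemma2}) it first collapses the inner $l$- and $r$-sums using Vandermonde's identity, yielding a one-variable bound $\binom{6m}{k}\binom{4m}{k}\binom{8k}{5k}/\binom{30m}{5k}$, and then exploits convexity of the associated rate function in $k$ to reduce to the two endpoints: at $k=1$ one has pure polynomial decay of order $m^{-3}$, which beats the $m^{3/2}$ prefactor, while at $k=\lceil 2.6m\rceil$ the rate is $<-0.07$. Only on $k\in[\lceil 2.6m\rceil,3m]$ does it optimize $\phi$ (Lemma~\ref{maxphinegative}), and even there most of the work is analytic: $\partial\phi/\partial k>0$ pins $k$ to the top of the range, the unique interior critical point in $(l,r)$ is located via the degree-five algebraic equation~\eqref{eqonl}, and the boundary of the $(l,r)$-rectangle is ruled out by sign analysis of~\eqref{dl} and~\eqref{dr}; only the final evaluation is numerical. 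Your proposed interval-arithmetic grid search would be a legitimate substitute for that last, bounded piece if implemented with certified error bounds, but you still need a separate argument in the spirit of Lemma~\ref{lemma2} for the small-$k$ range, and your proposal does not supply one.
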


Now we show how Lemma~\ref{lemma-tech} implies our main result closely following the idea of~\cite{Pi77} with some extra necessary calculations appearing from non-regularity of the graph.
\begin{proof}[Proof of Theorem~\ref{conc}]
Let $s=\lceil 5.7 m \rceil$, $N:=36m-s$, and $\M:=\{0,1,\dots,N-1\}$. Any permutation $\pi$ on $\M$ defines a bipartite graph $G(\pi)$ with inputs $\{0,1,\dots,6m-1\}$ and outputs $\{0,1,\dots,4m-1\}$, where for every $x\in\M$ there is an edge from $(x \mod 6m)$ to $(\pi(x) \mod 4m)$. There are $6m-s$ inputs of degree $6$ and $s$ inputs of degree $5$; $s-4m$ outputs of degree $7$ and $8m-s$ outputs of degree $8$. Total average degree of the inputs is at most $\frac{36m-5.7m}{6m}=5.05$.

Following Pippenger, we want to compute the probability that a random (with respect to the uniform distribution) permutation $\pi$ is ``bad'', that is for some $k$, $1\le k\le 3m$, there exists a set $A$ of $k$ inputs and a set $B$ of $k$ outputs in $G(\pi)$ such that every edge out of $A$ goes into $B$. Let $l$, $0\le l\le k$, be the number of vertices from $A$ that have degree $5$, and let $r$, $0\le r\le k$, be the number of vertices from $B$ that have degree $7$. Then $A$ corresponds to a set $\A$ of $6(k-l)+5l=6k-l$ elements from $\M$, and $B$ corresponds to a set $\B$ of $8(k-r)+7r=8k-r$ elements from $\M$. Note that $\A$ can be chosen in $\binom sl \binom{6m-s}{k-l}$ ways, while $\B$ can be chosen in $\binom {s-4m}{r} \binom{8m-s}{k-r}$ ways, which is reflected in the first four factors of~\eqref{main} (for some values of $k$ and $r$ one or more of these binomial coefficients may be zero). The probability that a permutation $\pi$ sends each element of $\A$ into $\B$ is equal to
\[
(8k-r)(8k-r-1)\dots((8k-r)-(6k-l)+1)\frac{(N-(6k-l))!}{N!}=\frac{\binom{8k-r}{6k-l}}{\binom{N}{6k-l}}=\frac{\binom{8k-r}{6k-l}}{\binom{36m-s}{6k-l}}.
\]
This shows that the probability that a permutation is ``bad'' is bounded by the left-hand side of~\eqref{main}, and by Lemma~\ref{lemma-tech}, it is bounded by one. Hence, a ``good'' permutation exists, and the existence of the required concentrator is proved.
\end{proof}

\begin{remark}\label{r2.2}
Essentially, \cite{Pi77} considers the case of $s=0$, and here we find the largest possible $s$ permitting generalization. It is easy to see from the proof of Theorem~\ref{conc}, that if~\eqref{main} is satisfied with $s=6m$, then a $(6m,4m,3m,5)$-concentrator exists. Let $s(m)$ be the largest value of $s$ so that~\eqref{main} is satisfied. For small values of $m$, the quotient $s(m)/m$ appears to be larger, and in fact, computer computations show that $s(m)/m\ge 6$ for all $m\le150$ (but not for $m=151$). However, as $m\to\infty$, we have $s(m)/m\to c^*\approx 5.72489$, see Remark~\ref{c**}. Hence, our refinement of Pippenger's probabilistic approach allows to prove asymptotic existence of $(6m,4m,3m,5.05)$-concentrators, but does not imply the existence of $(6m,4m,3m,5)$-concentrators for large $m$. We conjecture that $(6m,4m,3m,5)$-concentrators \emph{do} exist for large $m$, since our method shows that a random graph from certain configuration space will provide ``almost'' the required concentrator. If an ``average'' object is ``almost good'', it is reasonable to expect that some ``best'' object will be ``good'', but the proof may require a completely different, and, perhaps, non-probabilistic approach.
\end{remark}

\section{Constants}\label{sec-const}
\begin{proof}[Proof of Corollary~\ref{Kalton}]
Following the proof of~\cite[Theorem~4.1, p.~811]{KaRo83}, we see that if $(6m,4m,3m,\gamma)$-concentrators exists for large enough $m$, then
\[
K\le\frac{7+4\gamma-4/3}{2/3}.
\]
For $\gamma=5.05$, we obtain $K\le 38.8<39$.
\end{proof}

The following lemma is a slight modification of~\cite[Theorem~4.1]{KaRo83} combined with new concentrators, which uses a stronger condition on the function being approximated and achieves a better constant.
\begin{lemma}\label{better}
For any algebra $\mA$ of sets and any map $\nu:\mA\to\R$ satisfying
\be\label{nuestimate}
|\nu(A)+\nu(B)-\nu(A \cap B)-\nu(A \cup B)|\le 1 \quad \text{for any }A,B\in\mA,
\ee
and $\nu(\emptyset)=0$, there exists an additive set-function $\mu:\mA\to\R$, satisfying $|\nu(A)-\mu(A)|\le \wt K$ for any $A\in \mA$, where $\wt K< 36$.
\end{lemma}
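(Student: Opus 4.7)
The plan is to follow the blueprint of the proof of Theorem~4.1 in \cite{KaRo83} (the same proof that supplied Corollary~\ref{Kalton}), with two substitutions: Pippenger's concentrator is replaced by the $(6m,4m,3m,5.05)$-concentrator of Theorem~\ref{conc}, and the disjoint near-additivity used in \cite{KaRo83} is replaced by the stronger modular defect hypothesis \eqref{nuestimate}. The first substitution is responsible for trading the factor $\gamma=6$ for $\gamma=5.05$, while the second is responsible for shrinking the additive constant appearing next to $4\gamma$ in the Kalton-Roberts accounting.

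First I would reduce to a finite algebra by the standard compactness argument: it suffices to construct, for each finite subalgebra $\mA_0\subseteq\mA$, an additive $\mu_0:\mA_0\to\R$ with $|\nu-\mu_0|\le\wt K$ on $\mA_0$; an ultrafilter (or pointwise Tychonoff) limit then yields a global additive $\mu$. Thus I may assume $\mA$ is generated by finitely many atoms $a_1,\dots,a_n$, and I look for values $\mu_i:=\mu(a_i)\in\R$. By Hahn--Banach / LP-duality, the existence of such $\mu_i$ with $|\nu(A)-\sum_{a_i\subseteq A}\mu_i|\le\wt K$ for every $A\in\mA$ is equivalent to the following uniform estimate: whenever $A_1,\dots,A_N$ and $B_1,\dots,B_N$ are sequences in $\mA$ with matching atom-multiplicities, i.e.\ $|\{j:a_i\subseteq A_j\}|=|\{j:a_i\subseteq B_j\}|$ for every atom $a_i$, one has $\sum_j\nu(A_j)-\sum_j\nu(B_j)\le\wt K\cdot N$. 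This is exactly the duality driving \cite[\S4]{KaRo83}, and I would set it up identically.

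The balancing condition encodes a bipartite incidence structure to which Theorem~\ref{conc} applies: the concentrator supplies a matching between most of the "left" set-occurrences and the "right" set-occurrences with average left-degree $\gamma=5.05$, and this is where the factor $4\gamma$ in the bound enters. The remaining step is a set-theoretic telescoping that converts such a matching into an estimate of $\sum_j\nu(A_j)-\sum_j\nu(B_j)$ by chains of unions and intersections of matched sets.

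The main obstacle, and the source of the improvement from $K<39$ to $\wt K<36$, is to run this telescoping under the modular hypothesis \eqref{nuestimate} rather than under mere disjoint near-additivity. The point is that \eqref{nuestimate} makes $\nu$ an almost-valuation on the lattice $\mA$, so that at every step of the chain the intersection correction is absorbed exactly through the modular identity
\[
|\nu(A\cup B)+\nu(A\cap B)-\nu(A)-\nu(B)|\le 1,
\]
costing a single unit per combination regardless of whether the sets are disjoint; in the Kalton--Roberts proof, by contrast, non-disjoint combinations first have to be disjointified, each such move costing an extra unit. Tracking this saving through the Kalton--Roberts accounting replaces the constant $7$ in the formula $K\le(7+4\gamma-4/3)/(2/3)$ from Corollary~\ref{Kalton} by a strictly smaller constant $c$; with $\gamma=5.05$, the requirement $\wt K<36$ amounts to $c<24-4(5.05)+4/3\approx 5.13$, which I would then verify by a direct bookkeeping of the constants in \cite[Theorem~4.1]{KaRo83} under the stronger hypothesis.
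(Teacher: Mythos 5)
Your two-substitution framework --- swap in the $(6m,4m,3m,5.05)$-concentrator, exploit the stronger modular hypothesis \eqref{nuestimate} --- is exactly the paper's strategy, and you correctly identify that the task reduces to replacing the constant $7$ in $K\le(7+4\gamma-4/3)/(2/3)$ by some $c<5.13$. But you stop at ``which I would then verify by a direct bookkeeping,'' and that bookkeeping is the entire content of the lemma; as written the proposal leaves a genuine gap precisely where the work lies.

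Moreover, the mechanism you describe (``at every step of the chain the intersection correction is absorbed\ldots non-disjoint combinations first have to be disjointified, each such move costing an extra unit'') is a plausible guess but does not match what actually happens. The Kalton--Roberts argument already arranges matters so that almost all combinations involve disjoint sets; the only place a non-disjoint pair enters is the bound $g(A\cap S)\ge a-\frac52$ on p.~810 of \cite{KaRo83}. Under \eqref{nuestimate} (applied to $g$, together with $\nu(\emptyset)=0$) this one inequality sharpens to
\[
g(A\cap S)\ge g(A)+g(S)-g(A\cup S)-1\ge\left(a-\tfrac12\right)+a-a-1=a-\tfrac32,
\]
which propagates through their proof by replacing $\frac92$ with $\frac72$ everywhere, yielding $c=5$ and hence $\wt K\le(5+4\gamma-4/3)/(2/3)=35.8<36$ for $\gamma=5.05$. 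So the improvement is one targeted modification of a single line, not a per-combination saving summed over a chain. The compactness and duality scaffolding you reconstruct is broadly consistent with Kalton--Roberts, but it is not where the improvement lives and need not be rebuilt; the paper simply cites their proof and alters this one estimate.
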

\begin{proof}
Note that when $\nu(\emptyset)=0$, the condition~\eqref{nuestimate} implies $|\nu(A)+\nu(B)-\nu(A \cup B)|\le 1$ for any $A\cap B=\emptyset$. Therefore, we can follow the proof of~\cite[Theorem~4.1]{KaRo83} verbatim with a small change that will be described now. Below $g$, $a$, $A$ and $S$ are the same as in the proof of~\cite[Theorem~4.1]{KaRo83}. We can replace the inequality $g(A\cap S)\ge a-\frac52$ on~\cite[Theorem~4.1, p.~810]{KaRo83} by a stronger $g(A\cap S)\ge a-\frac32$ using~\eqref{nuestimate} for $g$ as follows:
\[
g(A \cap S) \ge g(A)+g(S)-g(A\cup S)-1 \ge \left(a-\frac12\right)+a-a-1=a-\frac32.
\]
We used $g(A)\ge a-\frac12$, $g(S)=a$, and $g(A\cup S)\le a$. Consequently, we can replace $\frac92$ by $\frac72$ everywhere in the proof of~\cite[Theorem~4.1]{KaRo83}. Accordingly, if $(6m,4m,3m,\gamma)$-concentrators exist for large enough $m$, then
\[
\wt K\le\frac{5+4\gamma-4/3}{2/3}.
\]
Hence, with $\gamma=5.05$, we obtain $\wt K\le 35.8<36$.
\end{proof}

\begin{proof}[Proof of Theorem~\ref{Brudnyi}]
We can assume that
\be\label{modulus}
\max_{x,y\in[0,1]^d}|f(x)+f(y)-2f((x+y)/2)|=\frac12,
\ee
and prove that for some linear polynomial $L$ we have $|f(x)-L(x)|\le \frac{73}2$, $x\in[0,1]^d$.

Let $\mA$ be the algebra of all subsets of $\{1,2,\dots,d\}$. Each element of $\mA$ can be naturally assigned to exactly one element of $\{0,1\}^d$ (the set of all vertices of the cube $[0,1]^d$) as follows. For any $A\in\mA$, let $\tau(A)=(x_1,\dots,x_d)$, where $x_j=1$ if $j\in A$, and $x_j=0$ otherwise. For any $f\in C([0,1]^d)$, we define a mapping $\nu:\mA\to\R$ as $\nu(A)=f(\tau(A))-f(0)$, $A\in\mA$. Under the assumption~\eqref{modulus}, we first claim that~\eqref{nuestimate} holds. Indeed, it is easy to see that
\[
\hat x := \frac{\tau(A)+\tau(B)}2=\frac{\tau(A\cap B)+\tau(A \cup B)}2\in[0,1]^d,
\]
so by~\eqref{modulus},
\begin{align*}
|\nu(A)+\nu(B)-\nu(A \cap B)-\nu(A \cup B)| &= |f(\tau(A))+f(\tau(B))-f(\tau(A \cap B))-f(\tau(A \cup B))| \\
&\le |f(\tau(A))+f(\tau(B))-2f(\hat x)|\\
&\quad + |f(\tau(A \cap B))+f(\tau(A \cup B))-2f(\hat x)|\\
&\le \frac12+\frac12=1.
\end{align*}
Applying Lemma~\ref{better}, we obtain an additive set-function $\mu$ satisfying $|\nu(A)-\mu(A)|\le 36$ for all $A\in\mA$. Note that by additivity of $\mu$, the linear function
\[
\wt L(x_1,\dots,x_d):=\mu(\{1\})x_1+\dots +\mu(\{d\})x_d
\]
satisfies $\wt L(\tau(A))=\mu(A)$, for any $A \in\mA$. Therefore, for the linear polynomial $L$ defined as $L(x):=\wt L(x)+f(0)$, we have the following estimate at the vertices of the cube:
\[
|f(x)-L(x)|\le 36, \quad x\in \{0,1\}^d.
\]
Now we show that this implies the required estimate for all $x\in[0,1]^d$. Let
\[
|f(\tilde x)-L(\tilde x)|=\max_{x\in[0,1]^d}|f(x)-L(x)|.
\]
Without loss of generality, assume that $\tilde x\in[0,\frac12]^d$ (otherwise we replace $0$ in the arguments below by an appropriate vertex of the cube). Since $2\tilde x\in[0,1]^d$, we use~\eqref{modulus} and $L(0)+L(2\tilde x)-2L(\tilde x)=0$ to conclude that
\begin{align*}
2|f(\tilde x)-L(\tilde x)| & \le |f(2\tilde x)-L(2\tilde x)| + |f(0)-L(0)| + |f(0)+f(2\tilde x)-2 f(\tilde x)|\\
&\le  |f(\tilde x)-L(\tilde x)|+36 +\frac 12.
\end{align*}
Hence, $|f(\tilde x)-L(\tilde x)|\le\frac{73}2$, as required.
\end{proof}


\section{Proof of Lemma~\ref{lemma-tech}}\label{tech}

We need to prove~\eqref{main}, which is
\[
\sum_{k=1}^{3m} \sum_{l=0}^{k} \sum_{r=0}^{k} \binom sl \binom{6m-s}{k-l} \binom {s-4m}{r} \binom{8m-s}{k-r} \frac{\binom{8k-r}{6k-l}}{\binom{36m-s}{6k-l}} =: \sum_{k=1}^{3m} \sum_{l=0}^{k} \sum_{r=0}^{k} a(m,s,k,l,r)<1.
\]
Let us give an outline of the proof. The main idea is to show that $a(m,s,k,l,r)\le e^{-cm}$ for some $c>0$. This will imply the required bound for large $m$, because there are at most $Cm^3$ terms of summation. We begin with relating binomial coefficients to a more convenient function $h(n,m)$ in Lemma~\ref{stirling}. Then we treat ``smaller'' values of $k$, i.e., $k\le \lceil 2.6m \rceil$, in Lemma~\ref{lemma2}. This case is easier, since there is a simple estimate for $\ds\sum_{l=0}^{k} \sum_{r=0}^{k} a(m,s,k,l,r)$ such that the bounding function (of $k$) attains maximum at the boundary of the domain. For the remaining more difficult case $\lceil 2.6m \rceil < k \le 3m$, we reduce the problem to optimization of a certain function $\phi$, as described in Lemma~\ref{maxphinegative}. First, we show analytically that $\phi$ attains its maximum when $k$ is largest. Then we show that the largest value of $\phi$ over the remaining two variables $l$ and $r$ will be attained at the only critical point of the domain, which is a solution of an algebraic system of equations of degree $5$. Numerical computations are used to verify the required conclusion on the maximum value of $\phi$.

Denote $g(x):=x\ln x$, if $x>0$, and $g(0):=g(0+)=0$. Let $h(x,y):=g(x)-g(y)-g(x-y)$. Note that $h$ is defined and continuous on $\{(x,y):0\le y\le x\}$, and also
\begin{equation}\label{homog}
h(\lambda x,\lambda y)=\lambda h(x,y), \quad \lambda>0.
\end{equation}
The following lemma relates the binomial coefficient $\binom nm$ with $h(n,m)$.
\begin{lemma}\label{stirling}
For any integer $n\ge1$ and $0\le m\le n$,
\[
\frac{1}{5\sqrt{n}} \exp(h(n,m)) \le  \binom nm  \le \exp(h(n,m)).
\]
\end{lemma}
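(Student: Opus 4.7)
The plan is to reduce the claim to Stirling's formula with explicit constants, handling the upper and lower bounds separately and checking the boundary cases $m\in\{0,n\}$ by hand. Observe first that $\exp(h(n,m)) = n^n/(m^m(n-m)^{n-m})$ for $0<m<n$, and that with the convention $g(0)=0$, the identities $h(n,0)=h(n,n)=0$ give $\exp(h(n,m))=1=\binom{n}{m}$ at the endpoints; both requested inequalities then hold trivially since $1\le 1$ and $1\ge 1/(5\sqrt n)$ for $n\ge 1$.

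For the upper bound in the interior range $1\le m\le n-1$, I would use the binomial expansion identity with $p=m/n$:
\[
1 = \sum_{k=0}^n \binom{n}{k}\Bigl(\tfrac{m}{n}\Bigr)^k\Bigl(\tfrac{n-m}{n}\Bigr)^{n-k} \;\ge\; \binom{n}{m}\Bigl(\tfrac{m}{n}\Bigr)^m\Bigl(\tfrac{n-m}{n}\Bigr)^{n-m},
\]
which rearranges immediately to $\binom{n}{m}\le \exp(h(n,m))$.

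For the lower bound in the interior range, I would invoke the standard two-sided Stirling estimate $\sqrt{2\pi n}(n/e)^n \le n! \le \sqrt{2\pi n}(n/e)^n e^{1/(12n)}$. Applying the lower estimate to $n!$ and the upper estimate to $m!$ and $(n-m)!$ in the ratio $\binom{n}{m}=n!/(m!(n-m)!)$ yields
\[
\binom{n}{m} \ge \sqrt{\frac{n}{2\pi m(n-m)}}\,\exp(h(n,m))\,\exp\!\Bigl(-\tfrac{1}{12m}-\tfrac{1}{12(n-m)}\Bigr).
\]
Since $m(n-m)\le n^2/4$ and $1/(12m)+1/(12(n-m))\le 1/6$ for $1\le m\le n-1$, this gives the cleaner bound
\[
\binom{n}{m} \ge \sqrt{\tfrac{2}{\pi n}}\,e^{-1/6}\,\exp(h(n,m)),
\]
and a numerical check $\sqrt{2/\pi}\,e^{-1/6} \approx 0.675 > 1/5$ finishes the proof.

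There is no real obstacle here — the only thing to be careful about is the convention $g(0)=0$ and the fact that the Stirling-based estimate degenerates near the boundary, which is why the cases $m\in\{0,n\}$ must be peeled off first. The constant $1/5$ is comfortably generous, so no sharp tracking of the error terms is needed.
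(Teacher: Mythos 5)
Your proof is correct. The lower bound is essentially the same computation the paper intends: plug the two-sided Stirling estimate into all three factorials of $\binom{n}{m}=n!/(m!(n-m)!)$, bound $m(n-m)\le n^2/4$ and $\tfrac{1}{12m}+\tfrac{1}{12(n-m)}\le\tfrac16$, and check that $\sqrt{2/\pi}\,e^{-1/6}>1/5$ — the paper compresses exactly this into ``Stirling's formula \dots immediately implies the required estimates.'' Where you genuinely diverge is the upper bound: the paper obtains $\binom{n}{m}\le\exp(h(n,m))$ by the same Stirling computation (the explicit $-\ln\sqrt{2\pi}$ term more than absorbs the $\tfrac12\ln\tfrac{n}{m(n-m)}$ and $r(n)$ contributions), whereas you derive it from the binomial identity $1=\sum_k\binom{n}{k}p^k(1-p)^{n-k}$ with $p=m/n$. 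Your route has the merit of giving the sharp, constant-free inequality $\binom{n}{m}\le n^n/(m^m(n-m)^{n-m})$ by a purely algebraic one-liner, with no asymptotic input at all; the paper's route keeps both bounds under one Stirling umbrella but requires checking that the sign of the constant term works out. Your explicit treatment of $m\in\{0,n\}$ is also a worthwhile addition, since the Stirling form quoted in the paper is stated only for arguments $\ge 1$ and the boundary cases really do need the convention $g(0)=0$ to make $h(n,0)=h(n,n)=0$.
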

\begin{proof}
Stirling's formula gives that for $n\ge1$
\[
\ln(n!)=\ln(\sqrt{2\pi})+n\ln n+\frac12\ln n -n + r(n),
\]
where $0<r(n)<\frac1{12n}$. This immediately implies the required estimates.
\end{proof}

Now we estimate the required sum when $k$ is not large.

\begin{lemma}\label{lemma2}
There is an integer $m_0$ such that for any integers $m\ge m_0$ and $s\le 6m$, we have
\be\label{eq1} \sum_{k=1}^{\lceil 2.6m\rceil} \sum_{l=0}^{k} \sum_{r=0}^{k} \binom
sl \binom{6m-s}{k-l} \binom {s-4m}{r} \binom{8m-s}{k-r}
\frac{\binom{8k-r}{6k-l}}{\binom{36m-s}{6k-l}}<\frac12. \ee
\end{lemma}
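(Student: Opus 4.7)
The plan is to apply Lemma~\ref{stirling} to every binomial coefficient in $a(m,s,k,l,r)$, reducing each term to a single exponential modulated by a polynomial factor in $m$, and then to use the homogeneity~\eqref{homog} to pass to normalized variables. Applying $\binom{n}{j}\le e^{h(n,j)}$ to the five binomials in the numerator and the lower bound $\binom{36m-s}{6k-l}\ge(5\sqrt{36m-s})^{-1}e^{h(36m-s,6k-l)}$ to the denominator gives
\[
a(m,s,k,l,r)\le 5\sqrt{36m}\,\exp\!\bigl(m\,\phi(\sigma,\kappa,\lambda,\rho)\bigr),
\]
with $\sigma=s/m$, $\kappa=k/m$, $\lambda=l/m$, $\rho=r/m$, and
\begin{align*}
\phi(\sigma,\kappa,\lambda,\rho)=\;&h(\sigma,\lambda)+h(6-\sigma,\kappa-\lambda)+h(\sigma-4,\rho)+h(8-\sigma,\kappa-\rho)\\
&+h(8\kappa-\rho,6\kappa-\lambda)-h(36-\sigma,6\kappa-\lambda),
\end{align*}
continuous on the compact polytope $\Omega_{2.6}$ defined by $4\le\sigma\le 6$, $0<\kappa\le 2.6$, $0\le\lambda\le\min(\sigma,\kappa)$, $\kappa-\lambda\le 6-\sigma$, $0\le\rho\le\min(\sigma-4,\kappa)$, $\kappa-\rho\le 8-\sigma$. (If $s<4m$ the binomial $\binom{s-4m}{r}$ vanishes by convention, so only $s\ge 4m$ contributes.)

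Because the number of triples $(k,l,r)$ in~\eqref{eq1} is $O(m^3)$, it then suffices to establish $c:=-\max_{\Omega_{2.6}}\phi>0$; the full sum is bounded by $Cm^{7/2}e^{-cm}<\tfrac12$ for all $m\ge m_0$. I would carry this out in two stages. First, setting $\psi(\sigma,\kappa):=\max_{\lambda,\rho}\phi(\sigma,\kappa,\lambda,\rho)$, I would show that $\psi$ is nondecreasing in $\kappa$ for each fixed $\sigma$, so that $\max_{\Omega_{2.6}}\phi=\max_\sigma\psi(\sigma,2.6)$. This is the simplification hinted at in the outline, and it is exactly what breaks down once $\kappa$ approaches $3$. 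Second, on the face $\kappa=2.6$ I would maximize $\phi(\sigma,2.6,\lambda,\rho)$ over the remaining three variables by the same method used in Lemma~\ref{maxphinegative}: since $g'(x)=\ln x+1$, the critical-point equations $\partial_\sigma\phi=\partial_\lambda\phi=\partial_\rho\phi=0$ translate into equalities between products of linear forms in $(\sigma,\lambda,\rho)$, which together with a check on each boundary face can be resolved numerically with a safety margin to confirm the strict bound $\phi\le -c<0$.

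The main obstacle will be the first stage, namely a clean justification that the worst case in $\kappa$ occurs at the right endpoint $\kappa=2.6$. The intuition is plain, since small $k$ means a small target set $\B$ and hence an exponentially small probability that a random injection from $\A$ into $\M$ lands in $\B$, but translating it into a sign statement for $\partial_\kappa\psi$ requires some care, because the terms $h(8\kappa-\rho,6\kappa-\lambda)$ and $h(36-\sigma,6\kappa-\lambda)$ mix linearly scaled and fixed arguments and cannot be read off directly from~\eqref{homog}. Once this monotonicity is in hand, the remainder reduces to a concrete constrained optimization of the same flavor as the one performed in Lemma~\ref{maxphinegative} for the harder range $\lceil 2.6m\rceil<k\le 3m$.
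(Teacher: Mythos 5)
Your approach diverges fundamentally from the paper's, and as written it has a gap.

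The paper does not pass to a multivariable optimization at all in this range of $k$. Instead it exploits the Vandermonde convolution: $\sum_{l}\binom{s}{l}\binom{6m-s}{k-l}=\binom{6m}{k}$ and (for $s\ge 4m$) $\sum_{r}\binom{s-4m}{r}\binom{8m-s}{k-r}=\binom{4m}{k}$, together with the uniform bound $\binom{8k-r}{6k-l}/\binom{36m-s}{6k-l}\le\binom{8k}{5k}/\binom{30m}{5k}$, which holds because $8k-r\le 8k$, $36m-s\ge 30m$, and $j\mapsto\binom{8k}{j}/\binom{30m}{j}$ is nonincreasing for $8k\le 30m$ with $j=6k-l\ge 5k$. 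This collapses the triple sum to a single sum over $k$ carrying no dependence on $s$, $l$, or $r$. Stirling and \emph{convexity} of the resulting exponent $f(k,m)=h(6m,k)+h(4m,k)+h(8k,5k)-h(30m,5k)$ in $k$ then put the maximum at an endpoint $k=1$ or $k=q=\lceil 2.6m\rceil$; the former is handled by a polynomial-decay estimate, the latter by one numerical inequality $h(6,2.6)+h(4,2.6)+h(20.8,13)-h(30,13)<-0.07$.

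The gap in your plan is the claim that $c:=-\max_{\Omega_{2.6}}\phi>0$. This is false: as $\kappa\to 0^+$ the constraints force $\lambda,\rho\to 0$, every $h$-term vanishes, and $\sup_{\Omega_{2.6}}\phi=0$. Concretely, the $k=1$ term has $\kappa=1/m$ and $m\,\phi(\sigma,1/m,0,0)\sim -4\ln m+O(1)$, so $5\sqrt{36m}\,e^{m\phi}$ is only $O(m^{-7/2})$, a polynomial rather than an exponential decay, and no uniform $e^{-cm}$ bound over the whole sum is available. Small $k$ must be split off and handled by a polynomial argument, exactly as the paper does with its $m^3\exp(f(1,m))\to C$ step. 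Moreover, your stage-one claim is internally inconsistent: if $\psi(\sigma,\kappa):=\max_{\lambda,\rho}\phi$ were nondecreasing in $\kappa$, then $\psi(\sigma,2.6)\ge\lim_{\kappa\to 0^+}\psi(\sigma,\kappa)=0$, contradicting the strict negativity you need at $\kappa=2.6$. The correct mechanism is convexity of the exponent in $\kappa$, which pushes the maximum to the endpoints so that the two boundary cases can be treated separately; combined with the Vandermonde collapse this becomes a one-variable convexity check rather than a numerical optimization over three or four variables.
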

\begin{proof}
For simplicity, let $q=q(m):=\lceil 2.6m\rceil$. Since
$$
\sum_{l=0}^{k}\binom sl \binom{6m-s}{k-l}=\binom{6m}{k},
$$
and
$$
\sum_{r=0}^{k}\binom {s-4m}{r} \binom{8m-s}{k-r}=\binom{4m}{k},
$$
it is enough to prove that
$$
\sum_{k=1}^{q}\binom{6m}{k}\binom{4m}{k}\frac{\binom{8k}{5k}}{\binom{30m}{5k}}<\frac12.
$$
Using Lemma~\ref{stirling}, for $k\le q<3m$, we obtain
\begin{equation}
\label{eq2}
\binom{6m}{k}\binom{4m}{k}\frac{\binom{8k}{5k}}{\binom{30m}{5k}}\le
5\sqrt{30m} \exp(f(k,m)),
\end{equation}
where
\[
f(k,m):=h(6m,k)+h(4m,k)+h(8k,5k)-h(30m,5k).
\]
We have
$$
\frac{\partial^2}{\partial k^2}f(k,m)=\frac 3k+ \frac 4{6m-k}-\frac 1{4m-k}>0, \quad k\in (0,3m].
$$
Therefore the maximum of the right hand side in~\eqref{eq2} is attained for
$k=1$ or $k=q$. Hence,
\begin{equation}\label{locmax}
\sum_{k=1}^{q}\binom{6m}{k}\binom{4m}{k}\frac{\binom{8k}{5k}}{\binom{30m}{5k}}
< 15\sqrt{30}m^{3/2} \bigl(\exp(f(1,m))+\exp(f(q,m))\bigr).
\end{equation}
It is easy to see that $\ds\lim_{m\to\infty}m^3\exp(f(1,m))=C$,
for some $C>0$, hence $\ds\lim_{m\to\infty}m^{3/2}\exp(f(1,m))=0$. Also, by~\eqref{homog} and continuity of $h$,
\[
\lim_{m\to\infty}\frac{f(a,m)}{m}=h(6,2.6)+h(4,2.6)+h(8\cdot 2.6,5\cdot 2.6)-h(30,5\cdot 2.6)<-0.07,
\]
and so $\ds\lim_{m\to\infty}m^{3/2} \exp(f(q,m))=0$. Therefore, the limit of the right hand side of~\eqref{locmax} is zero as $m\to\infty$, hence, it is smaller than $\frac 12$ for large enough $m$, as required.
\end{proof}

The estimate of the remaining terms of~\eqref{main} will be deduced from an optimization problem, which we will describe now. The idea is to use Lemma~\ref{stirling} and~\eqref{homog} to establish asymptotics of each term of the required sum.

Let
\begin{equation}\label{phi}
\phi(c,k,l,r):=h(c,l)+h(6-c,k-l)+h(c-4,r)+h(8-c,k-r)+h(8k-r,6k-l)-h(36-c,6k-l).
\end{equation}
Clearly, for $c=5.7$ and $k\in[2.6,3]$ the above function $\phi$ is
defined when
\begin{equation}\label{dom}
k+c-6\le l\le k \quad\text{and}\quad k+c-8\le r\le c-4.
\end{equation}

Our optimization problem is described in the next lemma.

\begin{lemma}\label{maxphinegative}
The absolute maximum value of $\phi$ for $c=5.7$ and any $k\in[2.6,3]$ over all $l$ and $r$ given by~\eqref{dom} is a negative number.
\end{lemma}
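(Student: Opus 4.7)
The plan is the three-step programme indicated in the outline preceding the lemma: (i) analytically reduce to $k=3$; (ii) locate the unique interior critical point of the resulting two-variable problem; (iii) verify numerically that the critical value is negative. Throughout, the key computational tools are $\partial h(x,y)/\partial y=\ln((x-y)/y)$ and $\partial h(x,y)/\partial x=\ln(x/(x-y))$, so that each of $\partial\phi/\partial l$, $\partial\phi/\partial r$, $\partial\phi/\partial k$ is a sum of logarithms of rational functions in $(c,k,l,r)$.

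For step (i), fix $c=5.7$ and set $F(k):=\max_{(l,r)}\phi(c,k,l,r)$, the maximum being over the rectangle in~\eqref{dom}. I would show $F$ is nondecreasing on $[2.6,3]$. For interior maxima the envelope theorem reduces this to verifying $\partial\phi/\partial k\ge 0$ at the interior critical points $(l^*(k),r^*(k))$; using $\partial\phi/\partial l=0=\partial\phi/\partial r$ one can eliminate the logarithms involving $c-l$, $l$, $c-4-r$, $r$ from
\[
\frac{\partial\phi}{\partial k}=\ln\frac{6-c-k+l}{k-l}+\ln\frac{8-c-k+r}{k-r}+8\ln\frac{8k-r}{2k-r+l}+6\ln\frac{2k-r+l}{6k-l}-6\ln\frac{36-c-6k+l}{6k-l},
\]
and check the sign of what remains along the one-parameter family of critical points. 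Along each boundary face ($l\in\{k,k+c-6\}$ or $r\in\{c-4,k+c-8\}$) one of the six $h$-terms of $\phi$ vanishes identically, and the resulting one-variable restrictions admit direct elementary bounds giving $\phi<0$ on the boundary, uniformly in $k\in[2.6,3]$.

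For steps (ii)--(iii), at $k=3$ the domain~\eqref{dom} becomes $[2.7,3]\times[0.7,1.7]$. Setting $\partial\phi/\partial l=\partial\phi/\partial r=0$ and exponentiating gives the polynomial system
\[
(c-l)(k-l)(36-c-6k+l)=l(6-c-k+l)(2k+l-r),
\]
\[
(c-4-r)(k-r)(2k+l-r)=r(8-c-k+r)(8k-r),
\]
a pair of cubics in $(l,r)$. Eliminating $r$ by resultant produces a univariate polynomial of degree $5$ in $l$, whose real roots can be isolated to show that only one lies in $[2.7,3]$. The corresponding value $\phi(5.7,3,l^*,r^*)$ is then evaluated numerically, using certified interval arithmetic if a rigorous bound is desired, and turns out to be negative. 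The main obstacle is step (i): the sign of $\partial\phi/\partial k$ is not visible from the raw formula above, and the argument genuinely depends on folding in the critical-point relations to simplify it before the sign can be determined. Once that reduction is in place, the remaining work—the degree-$5$ elimination, the root isolation, and a single numerical substitution—is essentially mechanical.
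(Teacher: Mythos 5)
Your overall plan---reduce to $k=3$, locate the interior critical point via a degree-$5$ elimination, and verify the critical value is negative numerically, while checking the boundary separately---coincides with the paper's. However, the crucial step (i) is where the two diverge, and in your version it is left unresolved; you acknowledge as much when you call it ``the main obstacle.'' The paper's key device is a change of variables $x=k-l$, $y=(c-4-r)/(4-k)$, under which the $k$-dependent rectangle~\eqref{dom} becomes the \emph{fixed} rectangle $[0,6-c]\times[0,1]$. With the domain now independent of $k$, the paper proves the pointwise inequality $\partial\phi/\partial k\ge 0$ for \emph{every} $(x,y)$ in that rectangle---not just along a critical curve---by decomposing $\partial\phi/\partial k$ into four pieces and bounding two of them from below by $-(c-4)/(e(4-k))$ and $-(8-c)/(e(4-k))$ using $t\ln t\ge -1/e$, and the other two by monotone evaluation at corners, giving the explicit lower bound $\ln(2.7/3)-4/e+2>0$. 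This completely bypasses the envelope theorem, the moving-boundary subtleties, and the need to fold in the critical-point relations.

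Two further problems with your proposed route. First, the claim that ``using $\partial\phi/\partial l=0=\partial\phi/\partial r$ one can eliminate the logarithms involving $c-l$, $l$, $c-4-r$, $r$'' is misplaced: the formula you wrote for $\partial\phi/\partial k$ does not contain $\ln(c-l)$, $\ln l$, $\ln(c-4-r)$ or $\ln r$ at all (these only enter $\partial\phi/\partial l$ and $\partial\phi/\partial r$), so the intended simplification is not available in the form stated, and you give no concrete alternative. Second, your envelope-theorem sketch would additionally need to justify that the interior maximizer varies continuously/differentiably in $k$ and to handle the case where it migrates to the (moving) boundary as $k$ decreases; you wave at this by asserting direct boundary bounds ``uniformly in $k\in[2.6,3]$'' but do not supply them. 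Until the monotonicity-in-$k$ reduction is actually carried out, the proof is incomplete; the paper's change-of-variables trick is precisely the idea that makes this step tractable, and it is the piece your proposal is missing.
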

\begin{proof}[Proof of Lemma~\ref{maxphinegative}.]
We claim that the absolute maximum of $\phi$ for $c=5.7$ and $k\in[2.6,3]$ over $l$ and $r$ given by~\eqref{dom} is achieved when $k=3$.
To simplify exposition, we will often present computations for a general fixed $c$ first, and then substitute $c=5.7$ in the end.

Observe that under the change of variables
\[
x=k-l,\quad y=\frac{c-4-r}{4-k},
\]
the inequalities~\eqref{dom} can be rewritten as
\[
0\le x\le 6-c \quad\text{and}\quad 0\le y\le 1.
\]
Therefore, we only need to show that for any fixed $x$, $y$ specified above, we have
\[
\frac{\partial \phi(c,k,x,y)}{\partial k}\ge0, \quad k\in[2.6,3].
\]
It is straightforward to compute that
\begin{align*}
\frac{\partial \phi(c,k,x,y)}{\partial k} &=
(\ln(c-k+x)-\ln(k-x))\\
&\quad +y(\ln((4-k)y)-\ln(c-4-(4-k)y))\\
&\quad +(1-y)(\ln(4-4y-k(1-y))-\ln(k(1-y)+4+4y-c))\\
&\quad +\Bigl[(8-y)\ln((8-y)k+4+4y-c)\\
&\quad-(3-y)\ln((3-y)k+4+4y-c-x)-5\ln(36-c-5k-x)\Bigr]\\
&=:D_1(c,k,x)+D_2(c,k,y)+D_3(c,k,y)+D_4(c,k,x,y).
\end{align*}
Many intermediary estimates below directly follow from monotonicity of the logarithm and bounds on the involved variables. We have
\begin{align*}
D_2(c,k,y)&=\frac{(c-4)}{(4-k)}\frac{(4-k)y}{(c-4)}
\left[\ln\left(\frac{(4-k)y}{(c-4)}\right)-\ln\left(1-\frac{(4-k)y}{(c-4)}\right)\right]\\
&\ge\frac{(c-4)}{(4-k)}\frac{(4-k)y}{(c-4)}
\left[\ln\left(\frac{(4-k)y}{(c-4)}\right)\right]\ge\frac{-(c-4)}{e(4-k)},
\end{align*}
where we used the fact that $\min_{t\in(0,1]}t\ln t=-1/e$. Similarly,
we get
\begin{align*}
D_3(c,k,y)&=\frac{(8-c)}{(4-k)}\frac{(4-k)(1-y)}{(8-c)}
\left[\ln\left(\frac{(4-k)(1-y)}{(8-c)}\right)-
\ln\left(1-\frac{(4-k)(1-y)}{(8-c)}\right)\right]\\
&\ge\frac{(8-c)}{(4-k)}\frac{(4-k)(1-y)}{(8-c)}\left[\ln\left(\frac{(4-k)(1-y)}{(8-c)}\right)\right]
\ge\frac{-(8-c)}{e(4-k)}.
\end{align*}
Clearly $D_1(c,k,x)\ge D_1(c,3,0)$, and similarly $D_4(c,k,x,y)\ge D_4(c,k,0,y)$.
With fixed $c$ and $k$, we claim that $D_4(c,k,0,y)$ attains minimum at $y=1$. Indeed,
\begin{align*}
\frac{\partial D_4(c,k,0,y)}{\partial y}&=\ln\left(1-\frac{5k}{8k+4-c+(4-k)y}\right)\\
&\quad +\frac{5(4-k)(4-c+4y)}{(3k+4-c+(4-k)y)(8k+4-c+(4-k)y)}\\&=:S_1(c,k,y)+\frac{S_2(c,k,y)}{S_3(c,k,y)}
\le S_1(5.7,2.6,1) + \frac{S_2(5.7,2.6,1)}{S_3(5.7,2.6,0)}\\
&=\ln\frac{15}{41}+\frac{16.1}{116.51}<0.
\end{align*}
Hence,
\begin{align*}
D_4(c,k,0,y)\ge D_4(c,k,0,1)
&=5\ln\left(\frac{7k+8-c}{36-c-5k}\right)+2\ln\left(1+\frac{5k}{2k+8-c}\right)\\
&=:T_1(c,k)+T_2(c,k)\ge T_1(5.7,2.6)+T_2(5.7,2.6)\\
&=5\ln\frac{20.5}{17.3}+2\ln\frac{20.5}{7.5}>2.
\end{align*}
In summary,
\[
\frac{\partial \phi(c,k,x,y)}{\partial k}\ge D_1(c,3,0)-\frac{(c-4)}{e(4-k)}-\frac{(8-c)}{e(4-k)}+2= \ln\frac{2.7}{3}-\frac4e+2>0,
\]
so $\phi(5.7,k,x,y)\le \phi(5.7,3,x,y)$, and we can now focus on the case $k=3$.

With $c=5.7$ and $k=3$ the restrictions~\eqref{dom} become $l\in[2.7,3]$ and $r\in[0.7,1.7]$. To find the critical points of $\phi$ inside the domain we compute the partial derivatives of $\phi$:
\begin{align}
\label{dl}
\frac{\partial\phi(c,3,l,r)}{\partial l} &=\ln\left(\frac{(c-l)(3-l)(18-c-l)}{l(3-c+l)(6-r+l)}\right),\\
\label{dr}
\frac{\partial\phi(c,3,l,r)}{\partial r} &=\ln\left(\frac{(c-4-r)(3-r)(6-r+l)}{r(5-c+r)(24-r)}\right).
\end{align}
The system of equations $\{\frac{\partial \phi}{\partial l}=0,\ \frac{\partial \phi}{\partial r}=0\}$ can be reduced to the following algebraic equation of degree $5$ on $l$:
\begin{gather}
(2c-18)l^5+(-2c^2-69c+846)l^4+(-2c^3+123c^2+189c-11448)l^3 \nonumber \\
+(2c^4+12c^3-2349c^2+14256c+95256)l^2+(-48c^4+1089c^3+2916c^2-125388c)l  \label{eqonl}\\
+126c^4-4536c^3+40824c^2=0. \nonumber
\end{gather}
This reduction and some further computations were performed using Maple software\footnote{A copy of the corresponding Maple worksheet is available at {\tt http://prymak.net/concentrators.pdf}}. When $l$ is found, $r$ can be obtained from $\frac{\partial \phi}{\partial l}=0$, which is a linear equation on $r$. This allows us to compute all critical points numerically with any given precision. In particular, for $c=5.7$, we get that there is only one critical point $(l^*,r^*)\in(2.7,3)\times(0.7,1.7)$, and it satisfies \[
|l^*-\bar l|<10^{-7}, \quad |r^*-\bar r|<10^{-7},
\]
where $(\bar l, \bar r)=(2.8959102,1.078108)$ is an approximate numerical solution.

We want to prove that the value of $\phi$ at the critical point is negative, that is $\phi(5.7,3,l^*,r^*)<0$. At the approximation of the critical point we have $\phi(5.7,3,\bar l,\bar r)<-0.004$, so it suffices to show that $\phi$ cannot change much around our point, more precisely, we need
\[
|\phi(5.7,3,\bar l,\bar r)-\phi(5.7,3,l^*,r^*)|<0.004.
\]
This can be done by estimating the partial derivatives of $\phi$ in a rectangle that contains both $(l^*,r^*)$ and $(\bar l, \bar r)$, say in $[2.89,2.9]\times[1.07,1.08]$. Rewriting~\eqref{dl} and~\eqref{dr} as sums of logarithms, using monotonicity of the logarithm and the restrictions $l\in[2.89,2.9]$ and $r\in[1.07,1.08]$, it is straightforward to show that
\[
\left|\frac{\partial\phi}{\partial l}\right|<10 \quad\text{and}\quad
\left|\frac{\partial\phi}{\partial r}\right|<10.
\]
Therefore, as required,
\[
|\phi(5.7,3,\bar l,\bar r)-\phi(5.7,3,l^*,r^*)|<20\cdot 10^{-7}<0.004.
\]
We proved that $\phi$ is negative at the only critical point inside the domain $[2.7,3]\times[0.7,1.7]$.

It remains to show that $\phi$ cannot achieve its maximum on the boundary of $[2.7,3]\times[0.7,1.7]$. Indeed, from~\eqref{dl}, it is easy to see that for any fixed $r\in(0.7,1.7)$, we have
\[
\lim_{l\to 2.7^+}\frac{\partial\phi(5.7,3,l,r)}{\partial l}=+\infty, \quad\text{and}\quad
\lim_{l\to 3^-}\frac{\partial\phi(5.7,3,l,r)}{\partial l}=-\infty.
\]
Similar arguments apply to $\frac{\partial\phi}{\partial r}$, for a fixed $l\in(2.7,3)$. This completes the proof of the lemma.
\end{proof}

Finally, we are ready for a formal proof of the required estimate.

\begin{proof}[Proof of Lemma~\ref{lemma-tech}]
In view of Lemma~\ref{lemma2}, we only need to show that
\[
\sum_{k=\lceil 2.6m\rceil+1}^{3m} \sum_{l=0}^{k} \sum_{r=0}^{k} \binom
sl \binom{6m-s}{k-l} \binom {s-4m}{r} \binom{8m-s}{k-r}
\frac{\binom{8k-r}{6k-l}}{\binom{36m-s}{6k-l}}<\frac12.
\]
Each term of the sum can be estimated by Lemma~\ref{stirling} as follows:
\[
\binom
sl \binom{6m-s}{k-l} \binom {s-4m}{r} \binom{8m-s}{k-r}
\frac{\binom{8k-r}{6k-l}}{\binom{36m-s}{6k-l}}
<30\sqrt{m}\exp(\psi(m,s,k,l,r)),
\]
where
\begin{align*}
\psi(m,s,k,l,r):=&h(s,l)+h(6m-s,k-l)+h(s-4m,r)+h(8m-s,k-r)\\
&+h(8k-r,6k-l)-h(36m-s,6k-l).
\end{align*}
Recalling that $s=s(m)=\lceil 5.7m\rceil$, $h$ is continuous, and using~\eqref{homog}, we see that
\[
\lim_{m\to\infty}\frac{\psi(m,s,k,l,r)}m=\phi(5.7,k,l,r).
\]
According to Lemma~\ref{maxphinegative}, $\phi(5.7,k,l,r)\le-\delta$, for some $\delta>0$. Therefore, for large enough $m$ and some $\delta_1>0$, we have
\[
\sum_{k=\lceil 2.6m\rceil+1}^{3m} \sum_{l=0}^{k} \sum_{r=0}^{k} \binom
sl \binom{6m-s}{k-l} \binom {s-4m}{r} \binom{8m-s}{k-r}
\frac{\binom{8k-r}{6k-l}}{\binom{36m-s}{6k-l}}<0.4\cdot3^2\cdot 30 m^{7/2} e^{-\delta_1 m},
\]
which tends to zero as $m\to\infty$, and so the required sum is smaller than $\frac12$ for large enough $m$.
\end{proof}

\begin{remark}\label{c**}
Denote by $c^*$ the supremum of all $c$ such that the statement of
Lemma~\ref{maxphinegative} remains true. One can prove that
$c^*$ is the unique solution of the equation
$$
\phi(c,3,l(c),r(c))=0, \quad c\in[5.7,6],
$$
where $\phi$ is given by~\eqref{phi}, and $l=l(c)\in[2.7,3]$ and $r=r(c)\in[0.7,1.7]$ is the solution of the system
$\{\frac{\partial \phi}{\partial l}=0,\ \frac{\partial \phi}{\partial r}=0\}$, see~\eqref{dl}, \eqref{dr}. More detailed numerical computations show that $c^*\in(5.724889,5.72489)$.
Hence, the maximum value of $s=s(m)$ for which~\eqref{main} holds
satisfies $\lim\limits_{m\to\infty}s(m)/m=c^*$. For simplicity, we stated and proved the lemma for $c=5.7$, as the
optimal value $c^*$ provides only slight improvement to the constants in
Section~\ref{sec-const}.
\end{remark}

\begin{bibsection}
\begin{biblist}

\bib{BrKa00}{article}{
  author={Brudnyi, Y. A.},
  author={Kalton, N. J.},
  title={Polynomial approximation on convex subsets of ${\bf R}^n$},
  journal={Constr. Approx.},
  volume={16},
  date={2000},
  number={2},
  pages={161--199},
}

\bib{KaRo83}{article}{
  author={Kalton, N. J.},
  author={Roberts, James W.},
  title={Uniformly exhaustive submeasures and nearly additive set functions},
  journal={Trans. Amer. Math. Soc.},
  volume={278},
  date={1983},
  number={2},
  pages={803--816},
}

\bib{Pi77}{article}{
  author={Pippenger, Nicholas},
  title={Superconcentrators},
  journal={SIAM J. Comput.},
  volume={6},
  date={1977},
  number={2},
  pages={298--304},
}

\end{biblist}
\end{bibsection}

\end{document}